\newtheorem{theorem}{Theorem}[section]
\newtheorem{corollary}[theorem]{Corollary}
\newtheorem{lemma}[theorem]{Lemma}
\newtheorem{example}[theorem]{Example}
\newtheorem{remark}[theorem]{Remark}
\newcommand{\FF}{\mathcal{F}}
\newcommand{\oGam}{\circ(\Gamma)}
\newcommand{\xGam}{\otimes(\Gamma)}
\newcommand{\bGam}{\bullet(\Gamma)}
\newcommand{\bGami}{\bullet(\Gamma_i)}
\newcommand{\setof}[2]{\left\{ #1 \, : \, #2 \right\}}
\newcommand{\N}{{\mathbb N}}
\newcommand{\Z}{{\mathbb Z}}
\newcommand{\tabulatedset}[1]{\left\{ #1 \right\}}
\newcommand{\C}[2]{C_{#1}^{#2}}
\newtheorem{definition}[theorem]{Definition}
\begin{document}

\begin{center}
         \bfseries\Large Circuits and circulant minors\footnote{Partially supported by PIP-CONICET 277, PID-UNR 416, PICT-ANPCyT 0586, and MathAmSud 15MATH06 PACK-COVER.}
 \end{center}

\smallskip

\begin{center}
\large Silvia Bianchi$^{a}$, Graciela Nasini$^{a,b}$, Paola Tolomei$^{a,b}$, and Luis Miguel Torres$^{c}$\\
\normalsize \{\texttt{sbianchi,nasini,ptolomei}\}\texttt{@fceia.unr.edu.ar, luis.torres@epn.edu.ec} \vspace{0.5cm}\\
\small{$^{a}$FCEIA, Universidad Nacional de Rosario, Rosario, Argentina}\\
\small{$^{b}$CONICET - Argentina}\\
\small{$^{c}$Centro de Modelizaci{\'o}n Matem{\'a}tica - ModeMat, Escuela Polit{\'e}cnica Nacional, Quito, Ecuador}\\ 
\end{center}

\medskip

\begin{abstract} 
Circulant contraction minors play a key role for characterizing ideal circular matrices in terms of
minimally non ideal structures. In this article we prove necessary and sufficient conditions for
a circular matrix $A$ to have circulant contraction minors in terms of circuits in a digraph associated with $A$.  In the particular case when $A$ itself is a
circulant matrix, our result provides an alternative characterization to the one previously known from
the literature. 

 \noindent\textit{keywords:} circular matrices, circulant minors, circuits, idealness. 

\end{abstract}

\section{Introduction}
Given a set $E=\{1,2,\ldots, n\}$ and a family $\FF$ of subsets of $E$, a \emph{packing} or \emph{covering} of $\FF$ is defined as a set $S \subset E$ that intersects each member of $\FF$ \emph{at most} or \emph{at least} in one element, respectively. If a weight is associated with each element of $E$, then the \emph{set packing problem} (SPP) asks for finding a packing of maximum weight, while the \emph{set covering problem} (SCP) asks for a minimum-weight covering. A wide range of  problems in combinatorics and graph theory can be formulated as set packing or set covering problems.

Both problems are known to be NP-hard in general. A common approach for their study consists in formulating them as integer linear programs. Let $M(\mathcal F)$ be a $0,1$-matrix whose rows are the incidence vectors of the members of $\FF$, and let $w\in \Z_+^n$. Then the problems can be formulated as:
\begin{align*}
\textrm{(SPP)} & \qquad \max\{w^Tx: M(\mathcal F) x\leq {\bf 1}, x\in \{0, 1\}^n\} &&&&&&&&
\textrm{(SCP)} &  \qquad \min\{w^Tx: M(\mathcal F) x\geq {\bf 1}, x\in \{0, 1\}^n\},
\end{align*}
\noindent where ${\bf 1} \in \Z^m$ is the vector whose entries are all equal to one.
   
Despite of their seeming similarity, it has been pointed out that these two problems have strong structural
differences. The set packing problem has been shown to be equivalent to the maximum-weight stable set problem, and this equivalence can be exploited for obtaining characterizations, strengthening formulations, and devising solution algorithms. In contrast, the set covering problem does not seem to have an equivalent representation as a graph optimization problem, even if coverings play an important role in the formulation of several important graph problems such as connectivity, coloring, and dominating sets, to cite some examples. As a consequence, the set covering problem has been far less studied than the set packing problem.

One important question is to characterize such families $\FF$ for which the integer programing formulations SPP and SCP are \emph{perfect formulations}, i.e., the linear systems  $M(\mathcal F) x \leq {\bf 1}, 0 \leq x \leq {\bf 1}$ and  $M(\mathcal F) x \geq {\bf 1}, 0 \leq x \leq {\bf 1}$ provide complete linear descriptions of the convex hulls of all feasible solutions of the corresponding problems. This question was solved in \cite{Chvatal75} for the case of set packing, using results from the (weak) perfect graph theorem: the formulation SPP is a perfect formulation if and only if $\FF$ is the family of maximal cliques of a perfect graph. Accordingly, maximal clique-vertex incidence matrices of perfect graphs are termed as \emph{perfect matrices}. On the other hand, $0,1$-matrices $M(\mathcal F)$ for which SCP is a perfect formulation for the set covering problem are known as \emph{ideal} matrices, but they have not yet been completely characterized.

Perfectness is a hereditary graph property, which means that any vertex induced subgraph of a perfect graph is itself perfect, and the corresponding holds for their clique-vertex incidence matrices. Similarly,  idealness can be shown to be a hereditary matrix property, which is transferred to \emph{minors} of the matrix. In the first case, this observation has led to the characterization of perfect graphs in terms of minimally non perfect subgraphs. The corresponding characterization of minimally non ideal matrices turned out to be much more difficult and is still an open task. However, several results have been obtained for particular classes of matrices.

Cornu\'ejols and Novick \cite{Cornuejols94} have characterized all ideal and minimally non ideal \emph{circulant} matrices. Circulant minors of a given circulant matrix play a fundamental role in this characterization. This fact motivated the authors to study conditions for such a minor to exist. They provided a sufficient condition in terms of the existence of a simple directed circuit in a particular digraph 
associated with the matrix. Later, Aguilera \cite{Aguilera09} extended this result, obtaining necessary and sufficient conditions in terms of the existence of a family of disjoint directed circuits in the same auxiliary digraph 
.

\emph{Circular} matrices generalize circulant matrices. Eisenbrand \emph{et al.} \cite{EisenbrandEtAl08} obtained a perfect formulation for SPP when $M(\FF)$ is a circular matrix. 
The inequalities involved in this formulation are related to directed circuits in another auxiliary digraph 
associated with the matrix. More recently, we have obtained a perfect formulation for SCP  
\cite{nosotres}. Once again, directed circuits in a certain digraph are related to the inequalities that appear in the linear description. Furthermore, all relevant directed circuits in our case induces circulant minors. As a consequence, non-ideal circulant minors are the minimal structures necessary to avoid idealness of circular matrices.

In \cite{nosotres} we also stated a necessary condition for a circular matrix to have a circulant minor. In this paper we further develop this result and completely characterize circulant minors of circular matrices in terms of directed circuits in its associated digraph. When restricted to the subclass of circulant matrices, our result yields an alternative characterization of circulant minors to the one provided in \cite{Aguilera09}.

\section{Notations and preliminary results} \label{intro}

For $n \in \N$, $[n]$ will denote the additive group defined on the set
$\tabulatedset{1, \ldots, n}$, with integer addition modulo $n$. 
Given $a,b\in[n]$, let $b-a$ be the minimum non-negative integer $t$ such that $a+t=b \mod n$. We denote by $[a,b]_n$ the \emph{circular interval} defined by the set $\{a+s: 0\leq s \leq b-a\}$. Similarly, $(a,b]_n$, $[a,b)_n$, and $(a,b)_n$ correspond to $[a,b]_n\setminus \{a\}$, $[a,b]_n\setminus \{b\}$, and $[a,b]_n\setminus \{a,b\}$, respectively. 


Unless otherwise stated, throughout this paper $A$ denotes a $\{0,1\}$-matrix of order $m\times n$.
Moreover, we consider the columns (resp. rows) of $A$ to be indexed by
$[n]$ (resp.~by $[m]$).
Two matrices $A$ and $A'$ are
\emph{isomorphic}, written as $A\approx A'$, if $A'$ can be
obtained from $A$ by a permutation of rows and columns. 

In the context of this paper, a matrix $A$ is called \emph{circular} if, for every row $i \in [m]$,
there are two distinct integer numbers $\ell_i, u_i\in [n]$ such that the $i$-th row of $A$ is the incidence vector of the set $[\ell_i,u_i]_n$.

A row $i$ of a circular matrix $A$ is said to \emph{dominate} a row $j \neq i$ of $A$ if the set $[\ell_j,u_j]_n \subseteq [\ell_i,u_i]_n$. Moreover, a row is dominating if it dominates some other row. In the following, we restrict our attention to matrices without dominating rows and without zero rows or columns. Interval matrices are a particular case of circulant matrices and it is known that they are ideal.

The following is an example of a $6\times 12$-circular matrix.
\begin{equation}
A=\left(
\begin{array}{cccccccccccc}
1&1&1&1&1&0&0&0&0&0&0&0\\
0&1&1&1&1&1&1&1&0&0&0&0\\
0&0&0&0&1&1&1&1&1&0&0&0\\
0&0&0&0&0&0&1&1&1&1&0&0\\
0&0&0&0&0&0&0&0&0&1&1&1\\
1&1&0&0&0&0&0&0&0&0&0&1\\
\end{array}
\right)
\label{ejemplo}
\end{equation}

A square circular matrix of order $n$ is called
a \emph{circulant matrix}. Observe that in this case the sets $[\ell_i,u_i]_n$ must have the same cardinality, say $k$, for
all $i\in [n]$, with $k \geq 2$.
Such a matrix will be denoted by $\C{n}{k}$ and 
w.l.o.g. we can assume that, for every $i\in [n]$, the $i$-th row of $\C{n}{k}$ is the incidence vector of the set $[i,i+k)_n$. 

Given $N\subset [n]$, the \emph{minor of} $A$ \emph{obtained by contraction of} $N$, denoted by $A/N$, is the submatrix of $A$ that results after removing all columns with indices in $N$ and all dominating rows. Moreover, the \emph{minor of} $A$ \emph{obtained by deletion of} $N$, is the submatrix of $A$ that results after removing all columns with indices in $N$ and all rows having an entry equal to $1$ in some column indexed by $N$. It is not hard to see that every proper minor of a circular matrix obtained by deletion is an interval matrix and then, it is ideal. As we are interested in non-ideal minors of circular matrices, in this work we focus only on minors obtained by contraction, and refer to them simply as \emph{minors}. Moreover, a minor of a matrix $A$ is called a \emph{circulant minor} if it is isomorphic to a circulant matrix.  

Circulant minors of circulant matrices have an interesting combinatorial characterization in terms of circuits in a particular digraph. Indeed, given a circulant matrix $\C{n}{k}$, the authors in \cite{Cornuejols94} define a directed auxiliary graph $G(\C{n}{k})$ with $[n]$ as its set of vertices and arcs of the form $(i,i+k)$ and $(i,i+k+1)$ for every $i\in[n]$, i.e., all arcs \emph{of length} $k$ and $k+1$, respectively. They prove that if $N\subset [n]$ induces a simple circuit in $G(\C{n}{k})$, then the matrix $\C{n}{k}/N$ is isomorphic to a circulant minor. In a subsequent work, Aguilera \cite{Aguilera09} shows that $C_n^k/N$ is isomorphic to a circulant minor of $\C{n}{k}$ if and only if $N$ induces  
a family of disjoint simple circuits in $G(\C{n}{k})$, each one having the same number of arcs of length $k$ and the same number of arcs of length $k+1$.

Working on the set covering problem on circular matrices \cite{nosotres}, we have found a sufficient condition for a circular matrix to have a circulant minor, also expressed in terms of circuits in the following digraph associated with the matrix: 

\begin{definition}\label{D(A)}\cite{nosotres}
Given a circular matrix $A$, let $F(A)$ be the directed graph whose set of vertices is $[n]$ and whose arcs are  
of the form $(\ell_i -1, u_i)$, for every $i\in [m]$ (called \emph{ row arcs}) and $(j, j-1), (j-1, j)$ with $j \in [n]$ (termed as \emph{reverse short arcs} and \emph{forward short arcs},  respectively).
\end{definition}   

We say  that a row arc $(u,v)$ in $F(A)$ jumps over a vertex $j \in [n]$ if $j \in (u  ,v]_n$. Moreover, the only forward (resp. reverse) short arc jumping over $j$ is the arc $(j-1,j)$ (resp. $(j,j-1)$).

Given a row arc $a=(u,v)$ of $F(A)$ the length of $a$, denoted by $l(a)$, equals $v-u$. If $a$ is a short arc, then $l(a)=1$ if it is a forward arc and $l(a)=-1$ if it is a reverse arc. The \emph{winding number} $p(\Gamma)$ of a directed circuit $\Gamma$ in $F(A)$ is defined by
$$p(\Gamma)= \frac{\sum\limits_{a\in E(\Gamma)}l(a)}{n},$$ 
where $E(\Gamma)$ denotes the set of arcs of $\Gamma$.

Any circuit $\Gamma$ in $F(A)$ induces a partition of the vertices of $F(A)$ into the following three classes:
\begin{itemize}
\item[(i)] \emph{circles} 
$\oGam := \setof{j \in [n]}{(j-1,j)\in E(\Gamma)}$,
\item[(ii)] \emph{crosses} 
$\xGam :=\setof{j \in [n]}{(j, j-1)\in E(\Gamma)}$, and 
\item[(iii)] \emph{bullets} $\bGam: = [n]\setminus (\circ(\Gamma)\cup \otimes(\Gamma))$.  
\end{itemize}

Figure~\ref{FdeAyGama} shows the digraph $F(A)$ for matrix $A$ in 
(\ref{ejemplo}). For illustration, a circuit 
$$\Gamma:=\{2,3,4,9,12,1,8,7,6,10,11,2\}$$ is depicted in bold lines. It has five row arcs and winding number two. 
Moreover, it induces the following partition of the vertices of  $F(A)$:  $\oGam=\{1,3,4,11\}$,  $\xGam=\{7,8\}$ and $\bGam= \{2,5,6,9,10,12\}$.

\begin{figure}[htbp]
	\centering
		\includegraphics[width=0.3\textwidth]{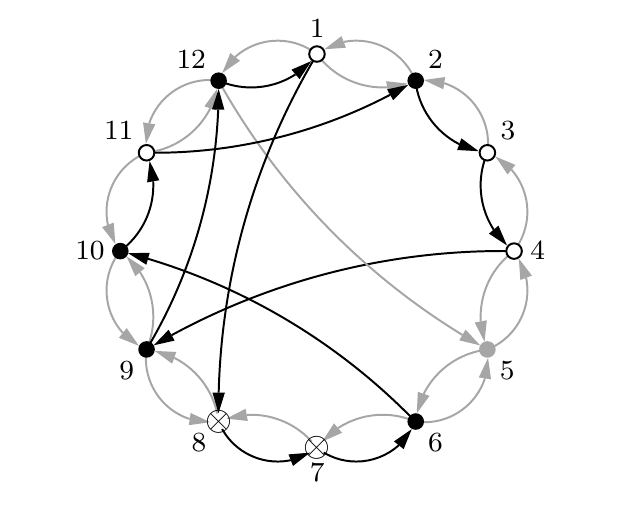}
	\caption{Auxiliary digraph $F(A)$ and a circuit $\Gamma$.}
	\label{FdeAyGama}
\end{figure}

Observe that circle (resp. cross) vertices are the heads (resp. tails) of forward (resp. reverse) short arcs of $\Gamma$. A bullet vertex is either a vertex outside $\Gamma$, or it is the tail or the head of a row arc.
We say that a bullet is an \emph{essential bullet} if it is reached by $\Gamma$. In Figure \ref{FdeAyGama} all vertices in $\bGam$ except for vertex $5$ are essential bullets.
 
In the following we denote by $p$ the winding number of $\Gamma$ and assume that the circuit has $s$ essential bullets $\{b_j: j\in [s]\}$, with $1\leq b_1 < b_2<\cdots< b_s\leq n$. 

For $j\in [s]$, let $t_j:=\min\{t\geq 1: b_j+t\in \bGam\}$. It can be verified that, if $t_j\geq 2$, then $[b_j+1,b_{j}+t_j)_n\subset \oGam$ or $[b_j+1,b_{j}+t_j)_n\subset \xGam$ (see \cite{nosotres} for further details). Then, denoting by $v_j$ the vertex $b_j+t_j-1$, we define the \emph{block} $B_j:=[b_j, v_j]_n$ which can be a \emph{circle} block, a \emph{cross} block or a \emph{bullet} block, depending on the vertex class that $b_j+1$ belongs to. 

It is straightforward to see that the blocks $\{B_j: j\in [s]\}$ define a partition of the vertex set of $\Gamma$. Moreover, for each $j\in [s]$, there exists one row arc \emph{leaving} $B_j$ and another row arc \emph{entering} $B_j$. 
Let $B_j^{-} \in B_j$ be the tail of the arc leaving $B_j$, while $B_j^{+} \in B_j$ denotes the head of the arc entering  $B_j$. In particular, if $B_j$ is a cross block, $B^-_j=b_j$ and $B^+_j=v_j$; if $B_j$ is a circle block, $B^-_j=v_j$ and $B^+_j=b_j$; finally, if $B_j$ is a bullet block, $B^-_j=B^+_j=b_j=v_j$. 

In the circuit from the example in Figure \ref{FdeAyGama}, the essential bullets are $b_1=2, b_2=6, b_3=9, b_4=10$, and $b_5=12$. The block
 $B_2=[6,8]$ is a cross block, $B_3=[9,9]$ is a bullet block and $B_1=[2,4]$, $B_4=[10,11]$ and $B_5=[12,1]$ are circle blocks. Observe that $B^+_1=b_1=2$ while $B^+_2=v_2=8$.

We gather some of the results in \cite{nosotres} in the following theorem:  

\begin{theorem}\cite{nosotres}
\label{peb}
Let $A$ be a circular matrix and $\Gamma$ be a circuit of $F(A)$ with winding number $p$ and $s$ essential bullets $\{b_j: j\in [s]\}$, with $1\leq b_1 < b_2<\cdots< b_s\leq n$. Then, $\gcd(s,p)=1$ and the row arcs of $\Gamma$ are $(B^-_i,B^+_{i+p})$ with $i\in [s]$, i.e. each row arc of $\Gamma$ jumps over $p$ essential bullets.

In addition, if $(u,v)$ is a row arc of $F(A)$ that jumps over $k$ essential bullets of $\Gamma$, then $k \in \{p-1, p, p+1\}$. Moreover, if $k=p-1$ (resp. $k=p+1$) then $u$ (resp. $v$) is a vertex of $\Gamma$.   
\end{theorem}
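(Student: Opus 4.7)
The plan is to first establish a structural fact that drives both claims: the permutation $\sigma$ on $[s]$ defined by ``the row arc of $\Gamma$ leaving block $B_i$ enters $B_{\sigma(i)}$'' is a cyclic shift. Since $A$ has no dominating rows, its rows (hence the row arcs of $F(A)$) admit a cyclic ordering in which both the tails $\ell_i - 1$ and the heads $u_i$ are cyclically increasing. Each block $B_i$ contributes exactly one used row arc, whose tail $B_i^-$ and whose head $B_i^+$ both lie in the block interval $[b_i, b_{i+1} - 1]_n$; consequently the sequences $(B_i^-)_{i \in [s]}$ and $(B_i^+)_{i \in [s]}$ are cyclically ordered by $i$. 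Matching the two cyclic orders forces $\sigma$ to be a cyclic shift, $\sigma(i) \equiv i + p' \pmod{s}$, and the single-circuit hypothesis gives $\gcd(s, p') = 1$. I expect this step to be the main obstacle, as the no-dominating-rows hypothesis must be extracted from $A$ in a non-trivial way and care is needed to reconcile the cyclic orderings with the three possible types of each block (circle, cross, bullet), which give different formulas for $B_i^{\pm}$.

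With the cyclic-shift structure in hand, a short case analysis on the types of $B_i$ and $B_{\sigma(i)}$ shows that the essential bullets in $(B_i^-, B_{\sigma(i)}^+]_n$ are exactly $\{b_{i+1}, b_{i+2}, \ldots, b_{\sigma(i)}\}$, so each row arc of $\Gamma$ jumps over $k_i = \sigma(i) - i \equiv p' \pmod{s}$ essential bullets. To identify $p'$ with $p$ I would use a flow-conservation argument. Define the coverage function
\[
c(v) := \#\{\text{row arcs of }\Gamma\text{ covering }v\} + [v \in \oGam] - [v \in \xGam].
\]
The in-degree/out-degree balance at each vertex of $\Gamma$ gives $c(v+1) - c(v) = (\text{arcs of }\Gamma\text{ ending at }v) - (\text{arcs of }\Gamma\text{ starting at }v) = 0$, so $c$ is constant on $[n]$; combined with $\sum_v c(v) = \sum_{a \in E(\Gamma)} l(a) = pn$, this yields $c(v) = p$ everywhere. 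For an essential bullet $b_l \in \bGam$ the short-arc indicators vanish, so exactly $p$ row arcs of $\Gamma$ contain $b_l$; summing over $l$ gives $\sum_i k_i = sp$, and together with $k_i = p'$ (constant) we conclude $p' = p$.

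For the second assertion, let $(u, v)$ be any row arc of $F(A)$. By the cyclic ordering of row arcs in $F(A)$, either $(u, v)$ coincides with one of the used arcs $a_i = (B_i^-, B_{\sigma(i)}^+)$ (so that $k = p$), or it is sandwiched between two consecutive used arcs $a_i$ and $a_{i+1}$, with $u \in [B_i^-, B_{i+1}^-]_n$ and $v \in [B_{\sigma(i)}^+, B_{\sigma(i)+1}^+]_n$. A short case check shows that $(B_i^-, B_{i+1}^-]_n$ contains exactly one essential bullet, namely $b_{i+1}$, and likewise $(B_{\sigma(i)}^+, B_{\sigma(i)+1}^+]_n$ contains exactly $b_{\sigma(i)+1}$. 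Decomposing $(u, v]_n = ((B_i^-, B_{\sigma(i)}^+]_n \setminus (B_i^-, u]_n) \cup (B_{\sigma(i)}^+, v]_n$ yields $k = p - \epsilon_1 + \epsilon_2$ with $\epsilon_1, \epsilon_2 \in \{0, 1\}$, so $k \in \{p-1, p, p+1\}$. In the edge case $k = p - 1$ we have $\epsilon_1 = 1$, forcing $u \geq b_{i+1}$ cyclically; combined with $u \leq B_{i+1}^-$, this places $u$ inside $B_{i+1}$, hence $u$ is a vertex of $\Gamma$. The case $k = p + 1$ is symmetric, placing $v$ in $B_{\sigma(i)+1}$.
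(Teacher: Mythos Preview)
The paper does not actually prove Theorem~\ref{peb}; it quotes this result from~\cite{nosotres} and uses it as a black box for the remainder of the argument. Hence there is no in-paper proof to compare your proposal against.

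That said, your outline is sound and would yield a complete proof. The two substantive ideas---(i) that the no-dominating-rows hypothesis forces the tail and head sequences $(B_i^-)$ and $(B_i^+)$ to be simultaneously cyclically ordered, so the block-to-block permutation $\sigma$ is a cyclic shift $i\mapsto i+p'$ with $\gcd(s,p')=1$ by connectivity of $\Gamma$, and (ii) the coverage function $c(v)$ is constant equal to $p$ via degree balance, pinning down $p'=p$ through double counting---are both correct as stated. Your treatment of the second assertion via sandwiching an arbitrary row arc between two consecutive row arcs of $\Gamma$ is also correct; the key observation that each of the half-open tail interval $(B_i^-,B_{i+1}^-]_n$ and head interval $(B_{i+p}^+,B_{i+p+1}^+]_n$ contains exactly one essential bullet follows cleanly from the block structure. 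One minor imprecision: you write that $B_i^\pm$ lie in $[b_i,b_{i+1}-1]_n$, whereas the block $B_i=[b_i,v_i]_n$ may be strictly smaller (non-essential bullets can sit between $v_i$ and $b_{i+1}$); this does not affect the argument since $B_i\subset[b_i,b_{i+1}-1]_n$ anyway.
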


%
%
%

We say that a row arc in $F(A)$ is a \emph{bad arc} (\emph{with respect to} $\Gamma$) if it jumps over $p-1$ essential bullets of $\Gamma$.
In Figure~\ref{FdeAyGama}, the row arc $(12,5)$ is a bad arc with respect to $\Gamma$ since it jumps only over one essential bullet, namely vertex $b_1=2$, while the winding number of $\Gamma$ is two.
In \cite{nosotres} it is proved that if $(u,v)$ is a bad arc of $\Gamma$ then $u$ belongs to a circle block and $v$ is either a circle or it is not reached by $\Gamma$.

The following theorem gives a sufficient condition for a circular matrix to
have a circulant minor:

\begin{theorem} \label{uncircuit}\cite{nosotres}
Let $A$ be a circular matrix. A circuit $\Gamma$ in $F(A)$ with $s$ row arcs, winding number $p$, and 
without bad arcs induces a circulant minor $C_s^p$. More precisely, if $B$ 
is the set of essential bullets of $\Gamma$ and $N=[n]\setminus B$ then $A/N \approx C_s^p$.
\end{theorem}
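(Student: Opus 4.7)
The plan is to describe the minor $A/N$ explicitly, column by column and row by row, and to check directly that it is isomorphic to $C_s^p$.

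First, since $N=[n]\setminus B$, contracting $N$ removes every column whose index is not an essential bullet of $\Gamma$, leaving exactly the $s$ columns indexed by $b_1<b_2<\cdots<b_s$. Relabeling $b_j\mapsto j$ identifies these columns with $[s]$.

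Next, I read off the rows. Each row of $A$ is the incidence vector of some circular interval $[\ell_i,u_i]_n$ and corresponds to the row arc $(\ell_i-1,u_i)$ of $F(A)$; its restriction to $B$ has a $1$ exactly at each essential bullet jumped over by that arc. By Theorem~\ref{peb}, the $s$ row arcs of $\Gamma$ are precisely $(B_i^-,B_{i+p}^+)$ for $i\in[s]$, and, since the essential bullets lie in cyclic order on $[n]$, each such arc jumps over exactly the $p$ consecutive essential bullets $b_{i+1},b_{i+2},\ldots,b_{i+p}$ (indices modulo $s$). After the relabeling, the row coming from the $i$-th arc becomes the incidence vector of the circular interval $[i+1,i+p]_s$, i.e.\ a row of $C_s^p$. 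Letting $i$ range over $[s]$ produces all $s$ rows of $C_s^p$, and $\gcd(s,p)=1$ (Theorem~\ref{peb}) ensures they are pairwise distinct.

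It then remains to show that no other row survives the contraction. Let $(u,v)$ be a row arc of $F(A)$ that is not a row arc of $\Gamma$, and let $k$ be the number of essential bullets it jumps over. By Theorem~\ref{peb} we have $k\in\{p-1,p,p+1\}$, and the no-bad-arc hypothesis rules out $k=p-1$. Because $(u,v]_n$ is a cyclic arc in $[n]$, the essential bullets jumped over form a contiguous block in the cyclic order $b_1,\ldots,b_s$. If $k=p+1$, that block strictly contains a block of $p$ consecutive essential bullets already realized by some arc of $\Gamma$, so the restricted row dominates a row already present in $A/N$ and is therefore removed. If $k=p$, the $p$ consecutive essential bullets jumped over must coincide with one of the cyclic shifts $\{b_{i+1},\ldots,b_{i+p}\}$, so the restricted row duplicates one already produced by $\Gamma$ and is again discarded. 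Hence $A/N\approx C_s^p$.

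The main obstacle is this last step: one has to argue cleanly that arcs with $k\in\{p,p+1\}$ really do jump over \emph{consecutive} essential bullets, and therefore can only reproduce (or dominate) the cyclic shifts already contributed by the arcs of $\Gamma$. This is where both the cyclic order of the bullets and the no-bad-arc hypothesis together do the work; without the latter, arcs with $k=p-1$ would contribute extra, shorter rows that are neither duplicates nor dominated, and the identification with $C_s^p$ would fail.
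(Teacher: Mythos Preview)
Your proposal is correct and follows essentially the same line as the paper. Note that Theorem~\ref{uncircuit} is quoted from \cite{nosotres} without proof here, but the paper does prove its multi-circuit generalization (Theorem~\ref{th:circulant-minor}), and that proof is structurally identical to yours: restrict to the columns indexed by essential bullets, observe that the row arcs of $\Gamma$ yield exactly the $s$ distinct cyclic shifts that make up $C_s^p$, and then use the absence of bad arcs to conclude that every remaining row has at least $p$ ones on $B$ and hence is dominated or duplicated. Your case split $k\in\{p,p+1\}$ via Theorem~\ref{peb} is just a slightly more explicit version of the paper's one-line ``at least $p$ entries equal to one'' argument.
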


As an illustration of the previous theorem, we present the following example.
\begin{example}\label{menorycircuito}
Consider the circular matrix $A$ given in (\ref{ejemplo}). The sequence of vertices 
$$(2,3,4,9,12,5,6,10,11,2)$$ induces a circuit 
in $F(A)$ without bad arcs,  
having five row arcs, and winding number two. The set of its essential bullets is $B=\{2,5,9,10,12\}$ and $N=[12]\setminus B$. It is easy to check that $A/N \approx C_5^2$.
\end{example}

Clearly, from the theorem above and Theorem~\ref{peb}, simple directed circuits without bad arcs in $F(A)$ \emph{induce} circulant minors $C_s^p$ of $A$, with $\gcd(s,p)=1$. We will see that, in order to obtain circulant minors $C_s^p$ with $\gcd(s,p)\geq 2$, \emph{families} of circuits in $F(A)$ are needed. Let us introduce for this purpose some more notations and definitions.


Let $\Gamma=\{\Gamma^i: i\in[a]\}$ be a family of $a$ (vertex) disjoint circuits in $F(A)$ and let 
$\bGam:=\bigcup_{i\in[a]} \bGami$. The set of essential bullets of $\Gamma$, $\oGam$ and $\xGam$ are defined in the same way. 
We say that a row arc in $F(A)$ is a \emph{bad arc} \emph{with respect to} $\Gamma$, if it is a bad arc with respect to $\Gamma^i$, for some $i\in[a]$.  

For $i\in [a]$, let $p_i$ and $s_i$ be the winding number and the number of row arcs of $\Gamma^i$, respectively. In addition, let $\{b^i_{\ell}: \ell\in[s_i]\}$ and  $\{B^i_{\ell}=[b_{\ell}^i,v_{\ell}^i]: \ell\in [s_i]\}$ denote the set of essential bullets and the partition of vertices of $\Gamma^i$ into blocks, respectively. 

In the next section we show that every family of disjoint circuits in $F(A)$ with no bad arcs induces a circulant minor of $A$.

\section{From circuits to circulant minors}

In the following the next known result on digraphs will be useful.  
\begin{remark}\label{ciclos}
Let $D$ be a digraph with vertex set $[s]$ and arcs of the form $(i,i+p)$ for all $i\in [s]$, with $1\leq p\leq n-1$.
Let $a=\gcd(s,p)$. Then $D$ is a collection of $a$ disjoint circuits, each one with $\frac{s}{a}$ arcs and winding number $\frac{p}{a}$.
\end{remark}

The next theorem proves that all circuits in a family of disjoint circuits of $F(A)$ have the same number of row arcs and the same winding number.

\begin{theorem}\label{mismopys}
Let $\Gamma=\{\Gamma^i: i\in[a]\}$ be a family of $a$ disjoint circuits in $F(A)$, each one with $s_i$ row arcs and winding number $p_i$. If $s=\sum\limits_{i=1}^a s_i$ and $p=\sum\limits_{i=1}^a p_i$ then $s_i=\frac{s}{a}$ and $p_i=\frac{p}{a}$ holds for all $i\in [a]$.  Moreover, $a = \gcd(s,p)$, each row arc of $\Gamma$ jumps over $p$ essential bullets, and no pair of row arcs in $\Gamma$ jumps over the same set of essential bullets. 
\end{theorem}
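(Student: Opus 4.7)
The plan is to first prove the central claim that \emph{every row arc of $\Gamma$ jumps over exactly $p$ essential bullets of $\Gamma$}. Fix a row arc $a = (u, v)$ of $\Gamma^i$. By Theorem~\ref{peb} applied to $\Gamma^i$, $a$ jumps over exactly $p_i$ essential bullets of $\Gamma^i$. For each $j \neq i$, $a$ is a row arc of $F(A)$ not contained in $\Gamma^j$, so Theorem~\ref{peb} applied to $\Gamma^j$ says $a$ jumps over $k_j \in \{p_j - 1,\, p_j,\, p_j + 1\}$ essential bullets of $\Gamma^j$; moreover $k_j = p_j - 1$ forces $u$ to be a vertex of $\Gamma^j$ and $k_j = p_j + 1$ forces $v$ to be a vertex of $\Gamma^j$. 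Since $u, v$ are vertices of $\Gamma^i$ and the family $\Gamma$ is vertex-disjoint, both alternatives are excluded, leaving $k_j = p_j$. Summing over $j \in [a]$ yields the central claim.

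Next I argue uniformity on the bullet side: for every essential bullet $b$ of $\Gamma$ and every $i \in [a]$, exactly $p_i$ row arcs of $\Gamma^i$ jump over $b$. Indeed, either $b \in \bGami$, or $b$ is not a vertex of $\Gamma^i$ at all (by vertex-disjointness); in either case $b$ is neither a circle nor a cross of $\Gamma^i$, so no short arc of $\Gamma^i$ can jump over $b$. The signed winding count of $\Gamma^i$ at $b$ (which is constantly $p_i$) therefore reduces to the positive count of row arcs of $\Gamma^i$ crossing $b$, so this count equals $p_i$. Counting incident pairs (row arc of $\Gamma^i$, essential bullet of $\Gamma$ it jumps over) two ways now gives $s_i \cdot p = s \cdot p_i$, so $p_i / s_i = p / s$ for every $i$. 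Combined with $\gcd(s_i, p_i) = 1$ from Theorem~\ref{peb}, this forces $s_i = s/\gcd(s, p)$ and $p_i = p/\gcd(s, p)$ for all $i$, and then $\sum_i s_i = s$ forces $a = \gcd(s, p)$.

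For the last assertion, each $\Gamma^i$ has exactly one outgoing row arc per block and each block is identified by its starting essential bullet, so the map sending a row arc to the starting bullet of its tail's block is a bijection between the $s$ row arcs and the $s$ essential bullets of $\Gamma$. Ordering the essential bullets cyclically as $b_1, \ldots, b_s$, the central claim implies that the row arc assigned to $b_r$ jumps exactly over the cyclic interval $\{b_{r+1}, \ldots, b_{r+p}\}$ (indices mod $s$); since distinct row arcs are assigned to distinct starting bullets, they determine distinct intervals, proving the final claim. The main obstacle in the whole plan is the central claim itself, and it dissolves cleanly once the ``moreover'' clause of Theorem~\ref{peb} is paired with the vertex-disjointness of the family; the rest is straightforward bookkeeping.
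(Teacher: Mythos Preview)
Your proof is correct. The first step---showing that every row arc of $\Gamma$ jumps over exactly $p$ essential bullets by combining Theorem~\ref{peb} with vertex-disjointness---is exactly what the paper does. From that point on, however, you take a different route. The paper relabels the $s$ essential bullets, observes that each row arc goes from block $B_j$ to block $B_{j+p}$, then \emph{shrinks} each block to its bullet to obtain the auxiliary digraph $D$ on $[s]$ with arcs $(i,i+p)$, and invokes Remark~\ref{ciclos} to read off that $D$ (and hence $\Gamma$) decomposes into $\gcd(s,p)$ circuits of the required size. You instead prove directly that each essential bullet of $\Gamma$ is jumped over by exactly $p_i$ row arcs of $\Gamma^i$ (using that such a bullet is never a circle or cross of $\Gamma^i$, so the local winding count reduces to a row-arc count), and then double-count pairs to obtain $s_i p = s p_i$; combined with $\gcd(s_i,p_i)=1$ this pins down $s_i$, $p_i$, and $a$ without ever constructing $D$. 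Your approach is a bit more self-contained (it bypasses Remark~\ref{ciclos}) at the cost of appealing to the standard fact that the signed crossing count of a closed walk is constant and equals its winding number; the paper's approach makes the block-to-block structure of $\Gamma$ more explicit, which is what is reused in the proof of Theorem~\ref{th:circulant-minor}. The final assertion about distinct bullet sets is handled the same way in both.
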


\begin{proof}
Observe that $\Gamma$ has $s=\sum\limits_{i=1}^a s_i$ disjoint row arcs. 

Given $i\in [a]$, by Theorem \ref{peb}, a row arc of $\Gamma^i$  jumps over $p_i$ essential bullets of $\Gamma^i$ and . $p_j$ bullets of $\Gamma^j$, for all $j\in [s]$, $j\neq i$, since $\Gamma^i$ and $\Gamma^j$ are disjoint.  

Then any arc of $\Gamma$ jumps over $p=\sum\limits_{h=1}^a p_h$ essential bullets of $\Gamma$. 

Let $\{b^i_{\ell}: \ell\in[s_i]\}$ be the set of essential bullets of $\Gamma^i$. From Theorem \ref{peb} we have that every arc of $\Gamma$ joins a vertex in the block $B^i_{\ell}=[b_{\ell}^i,v_{\ell}^i]_n$ with a vertex in the block $B^i_{\ell+p_i}=[b_{\ell+p_i}^i,v_{\ell+p_i}^i]_n$, for some $\ell\in [s_i]$ and  $i\in [a]$.

Now assume the $s$ essential bullets of $\Gamma$ are relabeled in such a way that $1\leq b_1<b_2<\cdots<b_s\leq n$ and let $B_j$ be the block containing  $b_j$. Since every row arc of $\Gamma$ jumps over $p$ essential bullets, we have that every row arc of $\Gamma$ goes from block $B_j$ to block $B_{j+p}$, for some $j\in [s]$ and thus, it jumps over the $p$ essential bullets $\{b_{j+1},\ldots,b_{j+p}\}$. Hence, no pair of row arcs jumps over the same set of essential bullets. 

Let $D$ be the directed digraph obtained by \emph{shrinking} every block in $\Gamma$ into its corresponding essential bullet, i.e, $D$ has $\{b_j:j\in [s]\}$ as vertex set and arcs of the form $(b_i,b_{i+p})$ for every $i\in [s]$.  
From Remark \ref{ciclos}
$D$ consists of $\gcd(s,p)$ disjoint circuits, each one having $\frac{s}{a}$ row arcs and winding number $\frac{p}{a}$.  
%
From the one-to-one correspondence between the row arcs of $\Gamma$ and the arcs of $D$, it follows that $\Gamma$ is a collection of $\gcd(s,p)$ disjoint circuits, each one having $\frac{s}{a}$ row arcs and  winding number equal to $\frac{s}{a}$.
\end{proof}

As a consequence, we obtain the following sufficient condition for a circular matrix to have a circulant minor. 

\begin{theorem}
\label{th:circulant-minor}
Let $A$ be a circular matrix and $\Gamma=\{\Gamma^i: i\in[a]\}$ be a family of $a$ disjoint circuits in $F(A)$ without bad arcs, each one having $\tilde s$ row arcs and winding number $\tilde p$
. In addition, let $B\subset [n]$ be the set of essential bullets of $\Gamma$, $N:=[n]\setminus B$, 
$s=a \tilde s$, and $p= a\tilde p$. Then $A/N \approx C_{s}^{p}$.
\end{theorem}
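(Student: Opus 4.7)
The plan is to identify the columns of $A/N$ with $[s]$ via the labels of the essential bullets, verify that the row arcs of $\Gamma$ contribute the $s$ distinct rows of $C_s^p$, and show that every other row of $A$ either duplicates or strictly dominates one of these rows in $A/N$. First I would invoke Theorem \ref{mismopys} to obtain that $\Gamma$ has $s=a\tilde{s}$ row arcs, winding number $p=a\tilde{p}$, $a=\gcd(s,p)$, each row arc of $\Gamma$ jumps over exactly $p$ essential bullets, and no two row arcs of $\Gamma$ jump over the same set of essential bullets. Relabelling the essential bullets so that $b_1<b_2<\cdots<b_s$ in the cyclic order inherited from $[n]$, the proof of Theorem \ref{mismopys} also shows that the row arcs of $\Gamma$ can be indexed by $j\in[s]$ so that the $j$-th arc jumps precisely over $\{b_{j+1},\ldots,b_{j+p}\}$, with indices modulo $s$.

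Next I would observe that, for any row of $A$ with support $[\ell,u]_n$, corresponding to the row arc $(\ell-1,u)$ in $F(A)$, the intersection of its support with $B$ equals the set of essential bullets that the arc jumps over; since $B$ inherits the cyclic order of $[n]$, this intersection is a circular interval in the cyclic order of $B$. Identifying the columns of $A/N$ with $[s]$ through the bijection $b_j\mapsto j$, the $s$ rows of $A$ corresponding to the arcs of $\Gamma$ thus produce exactly the $s$ distinct rows of the pattern $C_s^p$. It then remains to show that every other row of $A$ either duplicates one of these rows or strictly contains its support, so that it disappears from $A/N$ after dominating rows are removed.

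To that end, let $(u,v)$ be a row arc of $F(A)$ not belonging to $\Gamma$. For each $i\in[a]$, Theorem \ref{peb} applied to $\Gamma^i$ yields a number $k_i\in\{p_i-1,p_i,p_i+1\}$ of essential bullets of $\Gamma^i$ over which $(u,v)$ jumps; the no-bad-arc hypothesis sharpens this to $k_i\geq p_i$, and summing over $i$ gives that $(u,v)$ jumps over $k:=\sum_{i=1}^a k_i\geq p$ essential bullets of $\Gamma$. Since its support in $A/N$ is a circular interval in $B$ of length at least $p$, it must contain some $\{b_{j+1},\ldots,b_{j+p}\}$, namely the support of the $j$-th row arc of $\Gamma$. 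Hence every such row either coincides with or strictly dominates a row coming from $\Gamma$, and so is removed in $A/N$, leaving precisely the $s$ rows of $C_s^p$. I expect the main obstacle to be this last domination argument, which requires applying Theorem \ref{peb} separately to each $\Gamma^i$ and combining the bounds via the no-bad-arc hypothesis to secure the crucial inequality $k\geq p$, together with the fact that the circular-interval structure of supports is preserved upon intersection with $B$.
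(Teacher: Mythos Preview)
Your proposal is correct and follows essentially the same approach as the paper: use Theorem~\ref{mismopys} to see that the $s$ row arcs of $\Gamma$ yield the $s$ distinct rows of $C_s^p$ after restricting to the essential bullets, and then argue that every remaining row of $A$ dominates one of these. Your domination argument is in fact more explicit than the paper's, which simply asserts that the absence of bad arcs forces every other row to have at least $p$ ones in the columns of $B$; your use of Theorem~\ref{peb} on each $\Gamma^i$ separately to secure $k_i\geq p_i$ and then sum is exactly the justification behind that assertion.
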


\begin{proof}
%
As in the proof of Theorem \ref{mismopys}, assume the vertices in $B$ are relabeled in a such a way that $1\leq b_1<b_2<\cdots<b_s\leq n$. Let $A'$ be the submatrix of $A$ whose rows are in correspondence with the row arcs in $\Gamma$ and whose columns are indexed by the vertices in $B$. It follows that $A'$ is a $s\times s$-matrix. Moreover, since every row arc of $\Gamma$ jumps over $p$ consecutive vertices in $B$ and no pair of row arcs jumps over the same set of vertices, 
each row of $A'$ is the incidence vector of a circular interval of the form $[i, i+p)_s$, with $i \in [s]$, and no two rows of $A'$ are identical to each other.
Then, $A'$ is isomorphic to $C_s^p$. Finally, since $\Gamma$ has no bad arcs, each row of $A$ not in correspondence with an arc of $\Gamma$ has at least $p$ entries equal to one, i.e., it dominates some row from $A'$. Then, $A' = A/N$.
\end{proof}

In the digraph $F(A)$ depicted in Figure~\ref{FdeAyGama}, consider the family $\Gamma:= \{\Gamma^1,\Gamma^2\}$ containing the two circuits induced by the sequences of vertices $(1,8,7,6,10,11,2,1)$ and $(4,9,12,5,4)$, respectively. Each circuit has three row arcs and winding number equal to one. Moreover, $\Gamma$ has no bad arcs. It is easy check that the set of essential bullets of $\Gamma$ is $B=\{1,4,6,9,10,12\}$ and that $A/N \approx C_6^2$.


\section{From circulant minors to circuits}

It is natural to ask whether the converse of Theorem \ref{th:circulant-minor} holds. 
In other words, whether, given $N\subset [n]$ such that $A/N \approx C_s^p$,  there is a collection $\Gamma$ of disjoint circuits in $F(A)$ such that the set $B=[n]-N$ corresponds to the essential bullets of $\Gamma$.  The following examples show that this is not the case in general.

Consider the matrix $A$ given in \eqref{ejemplo} and let $B=\{2,5,7,10,12\}$. We have $A/N \approx C_5^2$, where $N=[12]\setminus B$. However, $F(A)$ contains no circuit for which vertex $7$ is an essential bullet, since no row arc in $F(A)$ has vertex $7$ as its head or tail.

As a second example, consider the set $B=\{2,5,8,10,12\}$. Again, we have $A/N \approx C_5^2$, with $N=[12]\setminus B$. In this case, each vertex in $B$ is reached by at least one row arc in $F(A)$. Now assume there is a family $\Gamma$ of circuits for which $B$ is the set of essential bullets. Then the row arc $(1,8)$ must belong to $E(\Gamma)$,
as it is the only row arc in $F(A)$ reaching vertex $8$. However, $(1,8)$ jumps over the three essential bullets $\{2,5,8\}$, contradicting Theorem \ref{th:circulant-minor}.


In the following, let $A$ be a circular matrix and $N\subset [n]$ such that $A/N \approx C_s^p$. Moreover, let $B=[n]\setminus N=\{b_1,\ldots,b_s\}$, with $1\leq b_1<\ldots<b_s\leq n$. 

As $A/N \approx C_s^p$ is a minor of $A$, for every $j\in B$ there is at least one $i\in[m]$ for which $[\ell_i,u_i]_n\cap B=\{b_{j-p+1},\ldots, b_j\}$. Observe that this row  is not necessarily unique. Indeed, for $A$ as given in \eqref{ejemplo}
and $B=\{2,5,8,10,12\}$, both rows $1$ and $2$ intersect $B$ in the same set $\{2,5\}$.

For each $j\in B$, let $R(j):=\{i\in [m]: [\ell_i,u_i]_n\cap B=\{b_{j-p+1},\ldots, b_j\}\}$. Clearly, any submatrix of $A$ obtained by selecting one row in $R(j)$ for every $j\in B$ and the columns in $B$ is a minor isomorphic to $C_s^p$. We are interested in identifying, for every $j\in B$, a particular index $r(j) \in R(j)$.

\begin{definition}\label{filas}
For every $j\in [s]$, let $h_j=\min \{u_i-b_j: i\in R(j)\}$ and let $r(j)$ be the element of $R(j)$ for which $u_{r(j)}=b_j+h_j$. 
\end{definition}

We have the following property:

\begin{lemma}\label{ibullets}
If there is $i\in [m]$ such that $\ell_i=b_{j-p}+1$ for some $j\in[s]$ then $r(j)=i$. 
\end{lemma}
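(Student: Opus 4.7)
The plan is to establish two things in sequence, which together with Definition~\ref{filas} will yield $r(j)=i$: first, that row $i$ must actually lie in $R(j)$; second, that $u_i$ minimises $u_{i'}$ over $R(j)$. The levers available are the hypothesis that $A$ has no dominating rows and the isomorphism $A/N\approx C_s^p$, the latter forcing every one of the $s$ circular-interval patterns of $C_s^p$ to be realised by some row of $A$ that survives the minor construction.

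To show $i\in R(j)$, I would start from the observation that, since $\ell_i=b_{j-p}+1$ lies immediately after the bullet $b_{j-p}$ in the cyclic order on $[n]$ and $B$ contains no element of the open arc $(b_{j-p},b_{j-p+1})_n$, the intersection $[\ell_i,u_i]_n\cap B$ is a cyclic prefix of the form $\{b_{j-p+1},\ldots,b_{j-p+k}\}$ for some $k\geq 0$. The aim is to force $k=p$. If $k>p$, then taking a row $i^*$ of $A$ realising the $j$-th row of $C_s^p$ in $A/N$, i.e.\ with $[\ell_{i^*},u_{i^*}]_n\cap B=\{b_{j-p+1},\ldots,b_j\}$, one gets $\ell_{i^*}\geq\ell_i$ and $u_{i^*}<b_{j+1}\leq u_i$, so $[\ell_{i^*},u_{i^*}]_n\subseteq[\ell_i,u_i]_n$ and row $i$ dominates row $i^*$, contradicting the hypothesis on $A$. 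If $0\leq k<p$, I would instead take the row $i^{\dagger}$ of $A$ realising the $l$-th row of $C_s^p$ for $l:=j-p+k$ (mod $s$): its restricted support $\{b_{l-p+1},\ldots,b_l\}$ strictly contains $\{b_{j-p+1},\ldots,b_{j-p+k}\}$, so $i^{\dagger}$ is a dominating row after restriction and would be removed in forming $A/N$, leaving the $l$-th pattern absent from $A/N$ and contradicting $A/N\approx C_s^p$. Hence $k=p$ and $i\in R(j)$.

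For the minimisation step, any $i'\in R(j)$ satisfies $\ell_{i'}\in(b_{j-p},b_{j-p+1}]_n$ by the definition of $R(j)$, hence $\ell_{i'}\geq b_{j-p}+1=\ell_i$. If $u_{i'}\leq u_i$ with $i'\neq i$, then $[\ell_{i'},u_{i'}]_n\subseteq[\ell_i,u_i]_n$, which would make row $i$ a dominating row, again contradicting the hypothesis on $A$. So $u_{i'}>u_i$ for every $i'\in R(j)\setminus\{i\}$, which gives $u_i=b_j+h_j$ and thus $r(j)=i$.

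The main obstacle is the subcase $k<p$: here the contradiction cannot come from the no-dominating-rows property of $A$ alone but must be extracted from the rigidity of $A/N\approx C_s^p$. One must pick the correct shift $l=j-p+k$ and verify via cyclic arithmetic that the restricted support of the $l$-th row of $C_s^p$ really does strictly contain the restricted support of row $i$, thereby ruling out \emph{every} candidate row of $A$ that could realise the $l$-th pattern in $A/N$.
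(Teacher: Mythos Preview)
Your proof is correct and follows the same two-step structure as the paper's: first establish $i\in R(j)$, then show $u_i$ is minimal over $R(j)$ via the no-dominating-rows hypothesis. The one place where the paper is shorter is your $k<p$ subcase: rather than invoking a specific row $i^{\dagger}$, the paper simply notes that $|B\cap[\ell_i,u_i]_n|\geq p$ holds for every row of $A$ (since $A/N\approx C_s^p$ means the minimal restricted supports are the $p$-intervals), which immediately yields $b_j\in[\ell_i,u_i]_n$ and makes the $i^{\dagger}$ argument unnecessary.
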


\begin{proof}
Let $i\in [m]$ such that $\ell_i=b_{j-p}+1$ for some $j\in[s]$. Since $|B\cap [\ell_i,u_i]|\geq p$, $b_j\in [\ell_i,u_i]_n$ and for every $k\in R(j)$ with $k\neq i$, $\ell_k\in [\ell_i+1, b_{j-p+1}]_n$. 
Since $A$ has no dominating rows, it follows that $u_i\in [b_j, u_k-1]_n$ for all $k\in R(j)$. But then, $i\in R(j)$ and $i=r(j)$. 
\end{proof}

Observe that $\ell_{r(j+p)} \in [b_j+1, b_{j+1}]_n$.  
Since $u_{r(j)} \in  [b_j, b_{j+1}-1]_n$, then $\ell_{r(j+p)} \in [b_j+1, u_{r(j)}]_n$ or $\ell_{r(j+p)} \in [u_{r(j)}+1, b_{j+1}]_n$. Then, we define the following: 

\begin{definition}\label{columnas}
For every $j\in [s]$, let $b_j':= \ell_{r(j+p)}-1$, if $\ell_{r(j+p)} \in [b_j+1,u_{r(j)}]_n$, and $b_j':=u_{r(j)}$, otherwise. Moreover, let $B':=\{b_j':j\in [s]\}$.
\end{definition}

It is clear that if $b_j=u_{r(j)}$ holds for some $j\in[s]$, 
then $b'_j=b_j=u_{r(j)}$. In addition, if $b_j=\ell_{r(j+p)}-1$ holds for some $j\in [s]$, then $b'_j=b_j=\ell_{r(j+p)}-1$. 
The next example shows that we may have $b'_j\neq b_j$ for some $j\in [s]$.    
 
\begin{example} \label{bybprima}
Consider again the circular matrix $A$ defined in \eqref{ejemplo}. Let $B=\{2,5,8,10,12\}$, and $N=[12]\setminus B$. It can be verified that $A/N \approx C_5^2$. Following our notation above, $b_3=8$ and the row $r(3)$ is the third row of $A$, i.e., the row corresponding to $[5,9]_{12}$. Moreover, observe that $b_{3+p}=b_{3+2}=12$ and the row $r(5)$ is the fifth row of $A$,
i.e., the row that corresponds to $[10,12]_{12}$. Thus, we have $\ell_{r(3+2)}- 1=10-1=9$ and $u_{r(3)}=9$. Then it holds that $b'_3=9$. Finally,
it can be verified that $b'_j=b_j$ holds for all $j\in [5]\setminus \{3\}$, i.e., $B'=\{2,5,9,10,12\}$.
\end{example}

%
%
Observe that in the previous example, $b'_1=u_{r(1)}\neq \ell_{r(1+p)}-1= 4$. However,
in the particular case when $A$ is a circulant matrix, we have $b_j= b'_j= \ell_{r(j+p)}-1$ for all $j\in [s]$, as shown in the next remark.

%

\begin{remark}\label{ibullets2}
If $A=C_n^k$, then for every $j\in [s]$ there is a row $i$ such that $\ell_i=b_{j-p}+1$. But then, from 
Lemma~\ref{ibullets}, it follows that $r(j)=i$. Then, 
$\ell_{r(j+p)}=b_j+1$, and thus $b'_j=b_j$.
\end{remark}

In Example~\ref{bybprima}, if we let $N':=[12]\setminus B'$, then we have $A/N \approx A/N' \approx C_5^2$.
We show in the following that it is always the case. 

\begin{lemma}\label{Bprima2}
Let $A$ be a circular matrix, $N\subset [n]$ such that $A/N \approx C_s^p$, and $B=[n]\setminus N$. If $B'$ is constructed as in Definition~\ref{columnas} and $N'=[n]\setminus B'$, then $A/N'\approx C_s^p$.
\end{lemma}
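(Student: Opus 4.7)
The plan is to exhibit a specific collection of $s$ rows of $A$ whose restriction to the columns in $B'$ already realizes $C_s^p$, and then to argue that every other row of $A$ is either a duplicate of one of these or strictly dominates one, so it gets removed in forming $A/N'$. The natural candidates are the rows $\{r(j):j\in[s]\}$ from Definition~\ref{filas}.

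First I would verify the basic structural fact $b'_j\in[b_j,b_{j+1}-1]_n$ for every $j\in[s]$, which follows directly from Definition~\ref{columnas} together with the ranges $\ell_{r(j+p)}\in[b_j+1,b_{j+1}]_n$ and $u_{r(j)}\in[b_j,b_{j+1}-1]_n$. In particular the elements of $B'$ remain cyclically ordered and pairwise distinct.

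The heart of the proof is the identity
\[
[\ell_{r(j)},u_{r(j)}]_n\cap B' \;=\; \{b'_{j-p+1},\ldots,b'_j\}, \qquad j\in[s].
\]
Containment: $b'_k\ge b_k\ge\ell_{r(j)}$ for every $k\in\{j-p+1,\ldots,j\}$; $b'_j\le u_{r(j)}$ by construction, and for $k<j$ one has $b'_k\le b_{k+1}-1\le b_j-1<u_{r(j)}$. Exclusion: Definition~\ref{columnas} applied at index $j-p$ yields either $b'_{j-p}=\ell_{r(j)}-1$ or $b'_{j-p}=u_{r(j-p)}<\ell_{r(j)}$, so $b'_{j-p}<\ell_{r(j)}$; and $b'_{j+1}\ge b_{j+1}>u_{r(j)}$. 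The relabeling $b'_j\mapsto j$ then identifies the submatrix of $A$ indexed by $\{r(j):j\in[s]\}$ and the columns in $B'$ with $C_s^p$.

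To finish, I must rule out contributions from every other row. Given $i\in[m]$, set $q:=|[\ell_i,u_i]_n\cap B|$ and, when $q\ge 1$, write $[\ell_i,u_i]_n\cap B=\{b_{k-q+1},\ldots,b_k\}$. I split into four cases:

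\emph{Case $q=p$, $i\ne r(k)$.} The minimality of $u_{r(k)}$ in $R(k)$ together with the no-dominating-rows hypothesis on $A$ forces $u_i>u_{r(k)}$ and $\ell_i>\ell_{r(k)}$; the same estimates as above give $[\ell_i,u_i]_n\cap B'=\{b'_{k-p+1},\ldots,b'_k\}$, so row $i$ duplicates $r(k)$ in $A/N'$.

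\emph{Case $q>p$.} The estimates $\ell_i\le b_{k-q+1}\le b_{k-p}\le b'_{k-p}$ and $b'_{k-1}\le b_k-1<b_k\le u_i$ show $[\ell_i,u_i]_n\cap B'\supseteq\{b'_{k-p},\ldots,b'_{k-1}\}=[\ell_{r(k-1)},u_{r(k-1)}]_n\cap B'$, so row $i$ dominates $r(k-1)$.

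\emph{Case $1\le q<p$.} Then $\{b_{k-q+1},\ldots,b_k\}\subsetneq\{b_{k-p+1},\ldots,b_k\}$, i.e.\ the support of $i$ on $B$ is strictly contained in that of $r(k)$. Hence $r(k)$ would be dominating in $A/N$, contradicting $A/N\approx C_s^p$.

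\emph{Case $q=0$.} Row $i$ is strictly contained in some gap $(b_k,b_{k+1})_n$; since $p\ge 2$, row $r(k+1)\in R(k+1)$ satisfies $\ell_{r(k+1)}\le b_{k-p+2}\le b_k<\ell_i$ and $u_{r(k+1)}\ge b_{k+1}>u_i$, so $r(k+1)$ itself dominates row $i$ in $A$, contradicting the no-dominating-rows hypothesis.

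Combining the four cases, the only undominated rows of $A$ restricted to $B'$ are the $r(j)|_{B'}$, giving $A/N'\approx C_s^p$.

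The step I expect to be the main obstacle is \emph{Case $q=0$}: although a gap row of $A$ vanishes on $B$ (and so causes no harm in $A/N$), it could, in the second branch of Definition~\ref{columnas} where $b'_k>b_k$, fall on the new column $b'_k$ and threaten to introduce a spurious short row in $A/N'$ that did not appear in $A/N$. Ruling out such rows relies essentially on the no-dominating-rows property of $A$, applied to the adjacent designated row $r(k+1)$ rather than to $r(k)$ or $r(k+p)$, neither of which would suffice.
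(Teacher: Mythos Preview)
Your proof is correct, but the route is genuinely different from the paper's. The paper proves a single pointwise equivalence
\[
b_j\in[\ell_i,u_i]_n \quad\Longleftrightarrow\quad b'_j\in[\ell_i,u_i]_n \qquad\text{for every }i\in[m],\ j\in[s],
\]
using only that $b'_j\in[b_j,u_{r(j)}]_n$ and $b'_j\notin[\ell_{r(j+p)},u_{r(j+p)}]_n$. This transports the \emph{entire} row structure from $B$ to $B'$ at once, so $A/N'\approx A/N$ follows without ever looking at which rows survive or invoking the no-dominating-rows hypothesis on $A$. Your argument instead singles out the rows $r(j)$, verifies they realise $C_s^p$ on $B'$, and then disposes of every other row by a four-way case split on $q=\lvert[\ell_i,u_i]_n\cap B\rvert$.

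What each buys: the paper's argument is shorter and actually proves the stronger statement $A/N'\approx A/N$ (not merely $\approx C_s^p$), and it needs neither $p\ge 2$ nor the absence of dominating rows in $A$. Your argument, on the other hand, makes explicit which rows of $A$ survive in $A/N'$ and why---information that is exactly what the proof of the main theorem needs next, so your version dovetails more directly with the construction of $\Gamma$ there. Two of your cases ($q<p$) are in fact vacuous, which you establish by contradiction; the paper's equivalence sidesteps having to notice this at all.
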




\begin{proof}
It is enough to prove that, for all $i\in [m]$, $[\ell_{i},u_{i}]_n\cap B'=\{b'_j:b_j \in [\ell_{i},u_{i}]_n\}$.

Observe that, from Definition~\ref{columnas}, it follows that  $b_j'\in [b_j,u_{r(j)}]_n$ and $b_j'\notin [\ell_{r(j+p)},u_{r(j+p)}]_n$ hold for each $j\in [s]$. 

Let $j \in [s]$. Assume there exists $i\in [m]$ such that $b_j'\in [\ell_{i},u_{i}]_n$, but $b_j\notin [\ell_{i},u_{i}]_n$.
Then we must have $\{b_{j+1},\ldots,b_{j+p}\}\subset [\ell_{i},u_{i}]_n$, but in this case $\ell_i\in(b_j,b'_j]_n$ and from the definition of $r(j+p)$ we have that $b_j' \in [\ell_{r(j+p)},u_{r(j+p)}]_n$, a contradiction.

Conversely, assume there exists $i\in [m]$ such that $b_j\in [\ell_{i},u_{i}]_n$, but $b'_j\notin [\ell_{i},u_{i}]_n$. Since  $b_j'\in [b_j,u_{r(j)}]_n$ it holds that $b_j\notin [u_{i}+1,u_{r(j)}]_n$ contradicting the definition of $r(j)$. 

\end{proof}

%
%
%

Finally, we present the main contribution of this paper.

\begin{theorem} 
Let $A$ be a circular matrix. Then, $A$ has $C_s^p$ as a circulant minor if and only if there exists a family $\Gamma=\{\Gamma^i: i\in [a]\}$ of $a=\gcd(s,p)$ disjoint circuits in $F(A)$ without bad arcs, each of them having $\frac{s}{a}$ row arcs and winding number $\frac{p}{a}$. Moreover, if $B$ is the set of essential bullets of $\Gamma$ and $N=[n]\setminus B$ then $A/N \approx C_{s}^{p}$.
\end{theorem}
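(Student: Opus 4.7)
My plan is to handle the two directions separately. The \emph{if} direction follows immediately by combining Theorems~\ref{mismopys} and~\ref{th:circulant-minor}: given a family of the stated form, Theorem~\ref{mismopys} supplies the structural data needed to invoke Theorem~\ref{th:circulant-minor} and obtain $A/N\approx C_s^p$ for $N=[n]\setminus B$.

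For the \emph{only if} direction, starting from $A/N\approx C_s^p$ with $B=[n]\setminus N=\{b_1,\dots,b_s\}$, I would first invoke Definitions~\ref{filas} and~\ref{columnas} together with Lemma~\ref{Bprima2} to replace $B$ by the refined set $B'=\{b'_1,\dots,b'_s\}$, which also satisfies $A/([n]\setminus B')\approx C_s^p$. Next I would \emph{construct} $\Gamma$ explicitly: for every $j\in[s]$ include the row arc $a_j=(\ell_{r(j)}-1,u_{r(j)})$ and link $a_j$ to $a_{j+p}$ by the unique short-arc path from $u_{r(j)}$ to $\ell_{r(j+p)}-1$ inside the gap $(b_j,b_{j+1})_n$ --- a forward path if $\ell_{r(j+p)}>u_{r(j)}$ (so $b'_j=u_{r(j)}$) and a reverse path if $\ell_{r(j+p)}\le u_{r(j)}$ (so $b'_j=\ell_{r(j+p)}-1$). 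Since $r$ is injective on $[s]$ and the linking paths live in pairwise disjoint gaps of $B$, all arcs of $\Gamma$ are distinct. The walk then follows the permutation $j\mapsto j+p\pmod s$, whose cycle decomposition splits $\Gamma$ into $a=\gcd(s,p)$ disjoint circuits, each having $s/a$ row arcs and winding number $p/a$; a short case analysis on the two cases above confirms that every $b'_j$ is a bullet reached by $\Gamma$ while all other vertices on $\Gamma$ become circles or crosses, so the set of essential bullets is precisely $B'$.

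The main obstacle will be ruling out bad arcs. The crux is the structural claim that \emph{every} row $i\in[m]$ of $A$ satisfies $|[\ell_i,u_i]_n\cap B|\ge p$. If not, the intersection would be a set of consecutive $b$'s strictly contained in some shift $\{b_{j-p+1},\dots,b_j\}$ (any shift works in the extreme case of empty intersection); then every row in $R(j)$ would have submatrix support strictly containing row $i$'s support, making all such rows dominating and hence removed from $A/N$, leaving the $j$-th shift absent from the minor and contradicting $A/N\approx C_s^p$. With this bound in hand, the bijection proved inside Lemma~\ref{Bprima2} (which in fact only uses $|[\ell_i,u_i]_n\cap B|\ge p$) yields $|[\ell_i,u_i]_n\cap B'|=|[\ell_i,u_i]_n\cap B|\ge p$ for every row. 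Finally, since the essential bullets of each $\Gamma^i$ form precisely one residue class of $B'$ modulo $a$ in the circular order on $[n]$, any consecutive block of $B'$ of size at least $p$ intersects the $\Gamma^i$-bullets in at least $\lfloor p/a\rfloor=p/a$ elements, strictly larger than $p/a-1$. Hence no row arc of $F(A)$ can be bad with respect to any $\Gamma^i$, and the proof is complete.
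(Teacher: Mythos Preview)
Your proposal is correct and follows essentially the same route as the paper: replace $B$ by $B'$ via Lemma~\ref{Bprima2}, assemble $\Gamma$ from the row arcs $(\ell_{r(j)}-1,u_{r(j)})$ together with the appropriate forward or reverse short-arc paths in each gap, and check that $B'$ is exactly the set of essential bullets. Your treatment of the bad-arc condition (via the residue-class distribution of the essential bullets among the $\Gamma^i$) is in fact more explicit than the paper's, which simply asserts that the bound $|[\ell_i,u_i]_n\cap B|\ge p$ forces $\Gamma$ to have no bad arcs.
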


\begin{proof}
From Theorem \ref{th:circulant-minor}, if $\Gamma$ is a collection of $a$ disjoint circuits in $F(A)$, without bad arcs, all of them having $\frac{s}{a}$ row arcs and winding number $\frac{p}{a}$, $B$ is the set of essential bullets of $\Gamma$, and $N=[n]\setminus B$, then $A/N=C_{s}^{p}$. 

Now assume that $A$ has a circulant minor $C_{s}^{p}$, i.e., there is a set $N\subset [n]$ such that $A/N=C_{s}^{p}$. Let $B=[n]\setminus N=\{b_j: j\in [s]\}$, with $1\leq b_1<\ldots<b_s\leq n$. By Lemma \ref{Bprima2}  we may assume that 
$b_j=\ell_{r(j+p)}-1$ or $b_j=u_{r(j)}$ for all $j\in [s]$. 

Consider the set $T$ of row arcs in $F(A)$ defined by $T=\{(\ell_{r(j)}-1, u_{r(j)}): j\in [s]\}$.

Let $P=\{j\in [s]: b_j\neq \ell_{r(j+p)}-1\}$. If $j\in P$, from Definition \ref{columnas}, $b_j=u_{r(j)}$ and there is a path of short forward arcs in $F(A)$ that joins $b_j$ with $\ell_{r(j+p)}-1$. Denote by $F_j$ the set of short arcs of such a path. In addition, since $b_j=u_{r(j)}$ and $b_{j+1}\in[\ell_{r(j+p)}, u_{r(j+p)}]_n$, there is no arc in $T$ that begins or ends in a vertex of $[b_j+1, \ell_{r(j+p)}-2]_n$.

Similarly, define $Q=\{j\in [s]: b_j\neq u_{r(j)}\}$. For every $j\in Q$, it holds that $b_j= \ell_{r(j+p)}-1$ and there is a path of reverse short arcs in $F(A)$ that goes from $u_{r(j)}$ to $\ell_{r(j+p)}-1=b_j$. Let $R_j$ be the set of short arcs of such a path. It is clear that no arc in $T$ begins or ends in a vertex of $[\ell_{r(j+p)}, u_{r(j)}-1]_n$. 

Finally, consider the subgraph $\Gamma$ induced by $T\cup (\cup_{j\in P} F_j)\cup (\cup_{j\in Q} R_j)$ in $F(A)$.

By construction, every vertex in $\Gamma$ has in-degree and out-degree equal to one. Then, $\Gamma$ is a family of $a'$ disjoint circuits in $F(A)$, each one of them having $s'$ row arcs and winding number $p'$, with $\gcd(s',p')=1$. 
Moreover, the set of essential bullets of $\Gamma$ coincides with $B$. Since $A/N \approx C_s^p$, each row arc in $F(A)$ jumps over at least $p$ essential bullets, and $\Gamma$ has no bad arcs. Furthermore, $\Gamma$ has $\left|B\right|=s$ row arcs and each row arc in $\Gamma$ jumps over $p$ essential bullets. Thus, $s=a's'$, $p=a'p'$, and $a'=\gcd(s,p)$.
\end{proof}
Consider the matrix $A$ given in (\ref{ejemplo}) and let $B=\{2,5,9,10,12\}$. We have 
\begin{center}
$T=\{(11,2),(12,5),(4,9),(6,10), (9,12)\}$, $P=\{2,5,10\}$, and $Q=\emptyset$. 
\end{center}
Thus, we have to add only forward short arcs. They correspond to $F_1=\{(2,3), (3,4)\}$, $F_2=\{(5,6)\}$, and $F_4=\{(10,11)\}$. It can be checked that $T\cup F_1 \cup F_2 \cup F_4$ induces a circuit in $F(A)$ whose essential bullets are the vertices in $B$.  

\medskip

As a corollary of the previous theorem and Remark \ref{ibullets2} we have an alternative characterization of circulant minors of circulant matrices to the one given in \cite{Aguilera09}. 

\begin{corollary}
Let $A=C_n^k$ and $D(n,k)$ be the digraph with vertex set $[n]$ and arcs of the form $(i, i+k)$ and $(i,i-1)$ for all $i\in [n]$. Then, $A$ has a circulant minor $C_{s}^{p}$ with $\gcd(s,p)=a$ if and only if there is a family $\Gamma=\{\Gamma^i: i\in [a]\}$ of disjoint circuits in $D(n,k)$, each of them having $\frac{s}{a}$ row arcs and winding number $\frac{p}{a}$. Moreover, if $B$ is the set of essential bullets of $\Gamma$ and $N=[n]\setminus B$, then $C_n^k/N \approx C_{s}^{p}$.
\end{corollary}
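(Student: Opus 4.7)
The plan is to establish the backward implication as an immediate consequence of Theorem~\ref{th:circulant-minor} together with Theorem~\ref{mismopys}: given the family $\Gamma$ with the stated parameters, its total row-arc count is $s$ and total winding number is $p$, so Theorem~\ref{th:circulant-minor} yields $A/N \approx C_s^p$. All the work therefore lies in the forward direction: starting from a set $N$ with $A/N \approx C_s^p$, I have to manufacture a family of disjoint circuits in $F(A)$ whose essential bullets recover $B=[n]\setminus N$.

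For the forward direction I would first normalize $B$ using Lemma~\ref{Bprima2}: replacing $B$ by $B'$ if necessary, I may assume that for every $j\in[s]$ either $b_j=u_{r(j)}$ or $b_j=\ell_{r(j+p)}-1$. I then define a tentative set of row arcs $T:=\{(\ell_{r(j)}-1,u_{r(j)}):j\in[s]\}$, which gives exactly $s$ row arcs, one for each essential bullet. The candidate digraph $\Gamma$ is then obtained by closing $T$ into circuits using short arcs: for each $j$ with $b_j=u_{r(j)}\neq \ell_{r(j+p)}-1$ I paste a forward-short-arc path $F_j$ from $u_{r(j)}$ to $\ell_{r(j+p)}-1$, and for each $j$ with $b_j=\ell_{r(j+p)}-1\neq u_{r(j)}$ I paste a reverse-short-arc path $R_j$ from $u_{r(j)}$ back to $\ell_{r(j+p)}-1$.

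The step I expect to be the technical heart of the argument is verifying that $\Gamma:=T\cup\bigcup_j F_j\cup\bigcup_j R_j$ is a disjoint union of circuits, i.e.\ that every vertex has in-degree and out-degree exactly one in $\Gamma$. Here the normalization from Lemma~\ref{Bprima2} is crucial, because the dichotomy $b_j\in\{u_{r(j)},\ell_{r(j+p)}-1\}$ is precisely what prevents the short-arc paths $F_j$ and $R_j$ from colliding with each other or with the tails/heads of arcs in $T$; in particular no arc of $T$ starts or ends in the interior $(b_j,\ell_{r(j+p)}-1)_n$ for $j\in P$ or $(\ell_{r(j+p)},u_{r(j)})_n$ for $j\in Q$, which is exactly what must be checked from the definitions of $r(j)$ and $B'$.

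Once $\Gamma$ is shown to be a disjoint family of circuits, I would identify the essential bullets of $\Gamma$ with $B$ by construction, count the row arcs (exactly $s$, one per $j\in[s]$) and verify that each row arc jumps over exactly $p$ essential bullets (which follows from $A/N\approx C_s^p$, since the $i$-th row of the minor has exactly $p$ ones). The absence of bad arcs is then automatic: any row arc of $F(A)$ corresponds to a row of $A$, and since its intersection with $B$ has at least $p$ elements, it cannot jump over only $p-1$ essential bullets. Finally, invoking Theorem~\ref{mismopys} on $\Gamma$ yields the decomposition into $a=\gcd(s,p)$ disjoint circuits, each with $s/a$ row arcs and winding number $p/a$, completing the argument.
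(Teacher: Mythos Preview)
Your argument essentially reproves the main theorem (the one immediately preceding this corollary) for circuits in $F(A)$, but the corollary is about the strictly smaller digraph $D(n,k)$: note that $D(n,k)$ contains only the row arcs $(i,i+k)$ and the \emph{reverse} short arcs $(i,i-1)$, whereas $F(C_n^k)$ also contains the forward short arcs $(i-1,i)$. You never address this distinction, and it matters in both directions.

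For the backward implication, Theorem~\ref{th:circulant-minor} requires the hypothesis that $\Gamma$ has no bad arcs, which you do not verify. The paper's argument is that a family of circuits in $D(n,k)$ has $\oGam=\emptyset$ (no forward short arcs means no circles), and since the tail of any bad arc must lie in a circle block, there can be no bad arcs.

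For the forward implication, producing circuits in $F(A)$ is not enough: you must show they actually live in $D(n,k)$, i.e.\ that your forward-short-arc paths $F_j$ are all empty, equivalently $P=\emptyset$. This is exactly the content of Remark~\ref{ibullets2}: in a circulant matrix every index $b_{j-p}+1$ is the left endpoint $\ell_i$ of some row, so Lemma~\ref{ibullets} forces $\ell_{r(j+p)}=b_j+1$ and hence $b_j=\ell_{r(j+p)}-1$ for every $j$. In particular $B'=B$ (no normalization via Lemma~\ref{Bprima2} is needed) and the construction uses only row arcs and reverse short arcs.

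In short, the missing ingredient is Remark~\ref{ibullets2}; it is precisely what reduces the corollary to the main theorem rather than a repetition of it.
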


\begin{proof}
Let $\Gamma=\{\Gamma^i: i\in [a]\}$ be a family of disjoint circuits in $D(n,k)$. Then, $\Gamma$ is a family of disjoint circuits in $F(C_n^k)$ such that $\circ(\Gamma)=\emptyset$. Thus, $\Gamma$ has no bad arcs and from the previous theorem,  $C_n^k/N \approx C_s^p$. 

Conversely, let $N\subset [n]$ be such that $C_n^k/N \approx C_s^p$. By Remark~\ref{ibullets2} we have $P=\emptyset$ in the proof of the previous theorem. Thus, the family $\Gamma$ of disjoint circuits has no forward short arcs and it is also a family of disjoint circuits in $D(n,k)$. 
\end{proof}

Recall that, given $1\leq k\leq n-1$, the digraph $G(C^k_n)$ defined in \cite{Cornuejols94} has $[n]$ as set of vertices and, for each $i\in [n]$, two arcs leaving $i$: one arc $(i,i+k)$ having length $k$, and one arc $(i,i+k+1)$ having length $k+1$. 
From the results in \cite{Aguilera09} we know that there exist $d$ disjoint circuits in $G(C^k_n)$ if  and only if there exist positive integers $n_1, n_2, n_3$ such that $\gcd(n_1, n_2,n_3)=1$, $n_1n= n_2 k+ n_3 (k+1)$, $d(n_2+n_3)\leq n-2$ and $dn_1\leq k-1$. Similarly, it can be proven that there exist $a$ disjoint circuits in $D(n,k)$ if and only if there exist positive integers $p, s$ and $w$, with $\gcd(s,p)=1$ such that $p n=s k- w$, $a(s+w)\leq n-2$, and $ap\leq k-1$. Then, as a consequence of the previous corollary and the results in \cite{Aguilera09}, 
we prove the following relationship between families of circuits in $D(n,k)$ and in $G(C^k_n)$. 

\begin{theorem}
Let $n,k$ such that $1\leq k\leq n-1$. 
\begin{enumerate}
	\item Let $\mathcal C=\{C^i: i\in [d]\}$ be  a family of disjoint circuits in $G(C_n^k)$, each one with $n_2$ arcs of length $k$, $n_3$ arcs of length $k+1$, and winding number $n_1$, such that $\gcd(n_1, n_2,n_3)=1$. Then there exists a family $\Gamma=\{\Gamma^i: i\in [a]\}$ of disjoint circuits in $D(n,k)$, with $a=\gcd(k-d n_1, n- d(n_2+n_3))$, each one with $s$ row arcs and winding number $p$, where $s=\frac{n-d(n_2+n_3)}{a}$ and $p=\frac{k-d n_1}{a}$.  
	\item Let $\Gamma=\{\Gamma^i: i\in [a]\}$ be a family of disjoint circuits in $D(n,k)$, each one with $s$ row arcs and winding number $p$,  with $\gcd(s,p)=1$. Then, there exists a family $\mathcal C=\{C^i: i\in [d]\}$ of disjoint circuits in $G(C_n^k)$, with $d=\gcd(k-ap,n(ap+1)-as(k+1), a(sk-np))$, each one with $n_2$  arcs of length $k$,  $n_3$ arcs of length $k+1$, and winding number $n_1$, where $n_2=\frac{n(ap+1)-as(k+1)}{d}$, $n_3=\frac{a(sk-np)}{d}$, and $n_1=\frac{k-ap}{d}$.
\end{enumerate}
Moreover, in both cases, 
the set of essential bullets of $\Gamma$ coincides with the set $[n]\setminus V(\mathcal C)$.
\end{theorem}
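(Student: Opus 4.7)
The plan is to leverage two parallel characterizations of circulant minors of $C_n^k$: Aguilera's theorem, which describes them via disjoint circuits in $G(C_n^k)$, and the corollary above, which describes them via disjoint circuits in $D(n,k)$. Both describe exactly the same family of minors, and in each case the correspondence is mediated by a contraction set $N \subseteq [n]$: we have $N = V(\mathcal C)$ in the first description and $N = [n] \setminus B_\Gamma$ in the second, where $B_\Gamma$ denotes the set of essential bullets of $\Gamma$. The final assertion of the theorem, that $B_\Gamma = [n] \setminus V(\mathcal C)$, is thus automatic once we identify the two contraction sets.

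For the computation of parameters, the relevant formulas for the induced minor $C_n^k/N \approx C_{s'}^{p'}$ are $s' = n - d(n_2+n_3) = as$ and $p' = k - dn_1 = ap$. The identity $s' = n - d(n_2+n_3)$ is merely the count of vertices not lying on the circuits in $\mathcal C$, while $s' = as$ and $p' = ap$ follow from the corollary applied to the same minor, using $\gcd(s,p) = 1$. The identity $p' = k - dn_1$ is the substantive one: it is equivalent to showing that each non-dominated row $i$ of $C_n^k/V(\mathcal C)$ satisfies $|[i,i+k)_n \cap V(\mathcal C)| = dn_1$, a fact from the Cornu\'ejols--Novick/Aguilera analysis that can be checked by counting the arcs of $\mathcal C$ whose tails lie in $[i,i+k)_n$.

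For direction~(1), starting from $\mathcal C$ with parameters $(d,n_1,n_2,n_3)$, Aguilera's theorem gives the minor $C_{n - d(n_2+n_3)}^{k - dn_1}$. Applying the corollary to this minor yields the desired family $\Gamma$ in $D(n,k)$ with exactly $a = \gcd\bigl(n - d(n_2+n_3),\,k - dn_1\bigr)$ disjoint circuits, each having $s = (n - d(n_2+n_3))/a$ row arcs and winding number $p = (k - dn_1)/a$, as required. For direction~(2), starting from $\Gamma$ with per-circuit parameters $(s,p)$ and $\gcd(s,p)=1$, the corollary supplies the minor $C_{as}^{ap}$, and Aguilera's theorem then yields a family $\mathcal C$ with $V(\mathcal C) = [n] \setminus B_\Gamma$. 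Matching forces $d(n_2+n_3) = n - as$ and $dn_1 = k - ap$. Substituting into the defining identity $n_1 n = n_2 k + n_3(k+1)$, after multiplying through by $d$, yields
\[
dn_3 \;=\; dn_1\, n - d(n_2+n_3)\,k \;=\; (k - ap)\,n - (n - as)\,k \;=\; a(sk - pn),
\]
and consequently $dn_2 = d(n_2+n_3) - dn_3 = n(ap+1) - as(k+1)$. Since $\gcd(n_1,n_2,n_3) = 1$, we conclude
\[
d \;=\; \gcd(dn_1,\,dn_2,\,dn_3) \;=\; \gcd\!\bigl(k - ap,\ n(ap+1) - as(k+1),\ a(sk - pn)\bigr),
\]
matching the theorem's formula, and the values of $n_1,n_2,n_3$ are then read off directly.

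The main obstacle is the verification of the identity $p' = k - dn_1$ for the winding number of the induced minor; once this (together with the routine count $s' = n - d(n_2+n_3)$) is in place, the rest of the argument reduces to the arithmetic bookkeeping above, matching the two parallel descriptions of a single underlying minor.
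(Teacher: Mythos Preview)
Your proposal is correct and follows essentially the same approach as the paper: both arguments bridge between the two digraphs via the common circulant minor $C_n^k/N$, invoking Aguilera's characterization on the $G(C_n^k)$ side and the preceding corollary on the $D(n,k)$ side, and then match parameters by arithmetic. The paper's proof is terser---it just records the identities $(k-dn_1)n=(n-d(n_2+n_3))k-dn_3$ and $(k-ap)n=(n(ap+1)-as(k+1))k+a(sk-np)(k+1)$ and defers everything else to the cited results---whereas you spell out the parameter-matching in direction~(2) explicitly and correctly flag that the identity $p'=k-dn_1$ is the substantive input from the Cornu\'ejols--Novick/Aguilera analysis.
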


\begin{proof}
The existence of $d$ disjoint circuits in $G(C_n^k)$, with $n_1$ arcs of length $k$, $n_2$ arcs of length $k+1$, and winding number $n_1$ with $\gcd(n_1, n_2,n_3)=1$ implies that 
$n_1 n= n_2 k + n_3 (k+1)$. 
It can be verified that 
$(k-d n_1)n= (n- d(n_2+n_3))k-dn_3$,
proving the first statement.

The existence of $a$ disjoint circuits in $D(n,k)$, with $s$ row arcs, winding number $p$, and with $\gcd(s,p)=1$ implies that $pn=sk-w$ for some $w\geq 0$. 
It can be verified that
$$(k-ap)n = (n(ap+1)-as(k+1)) k + a(sk-np) (k+1),$$
and the second statement follows.


The relationship between the essential bullets of $\Gamma$ and the vertices of $\mathcal C$ follows from the relationship between these families of circuits in $G(C_n^k)$ and the corresponding circulant minors, proved in \cite{Aguilera09}, and from the relationship between these minors and  the families of circuits in $D(n,k)$, proved in the previous corollary.

\end{proof}

%
%
%
%
%

\end{document}